\newtheorem{thm}{Theorem}[section]
\newtheorem{remark}[thm]{Remark}
\newtheorem{prop}[thm]{Proposition}
\newtheorem{assumptions}[thm]{Assumptions}
\newcommand{\R}{\mathbb{R}}
 \newcommand{\E}{\mathbb{E}}
  \newcommand{\cX}{\mathbb{R}^d}
  \newcommand{\cY}{\mathbb{R}^\ell}
 \newcommand{\eps}{\epsilon}
\newcommand{\x}{x^{(\epsilon)}}
\newcommand{\y}{y^{(\epsilon)}}
\newcommand{\Lip}{\operatorname{Lip}}
\newcommand{\dist}{\operatorname{dist}}
\newcommand{\tildeJ}{{\tilde J}}
\numberwithin{equation}{section}
\begin{document}
\setlength{\baselineskip}{10pt}
\title{A Note on Diffusion Limits of\\
Chaotic Skew Product Flows}
\author{
I. Melbourne
\footnote{E-mail address: i.melbourne@surrey.ac.uk} \\
        Mathematics Department\\
        University of Surrey\\
	Guildford GU2 7XH, UK\\
and\\
        A.M. Stuart 
\footnote{E-mail address: a.m.stuart@warwick.ac.uk.} \\
        Mathematics Institute \\
        Warwick University \\
        Coventry CV4 7AL, UK
                    }

\date{September 30, 2010.  Updated 19 April 2015.\\ This paper
corrects an error in the published version of the paper
which appeared in {\bf Nonlinearity 24}(2011), 1361--1367.}

\maketitle

\begin{abstract}
We provide an explicit rigorous derivation of a diffusion
limit -- a stochastic differential equation with
additive noise -- from a deterministic skew-product flow. This
flow is assumed to exhibit time-scale separation and has
the form of a slowly evolving system driven by a 
fast chaotic flow.  Under mild assumptions on the fast flow,
we prove convergence to a stochastic differential equation as
the time-scale separation grows.   
In contrast to existing work, we do not require the flow to have good 
mixing properties.   As a consequence, our results incorporate a large class
of fast flows, including the classical Lorenz equations.

The updated version contains a correction to the proof of the main result, and removes an unnecessary large deviation assumption.
\end{abstract}

\section{Introduction}

There is considerable interest in understanding
how stochastic behaviour emerges from deterministic
systems, both in the mathematics and applications
literature. In this note we provide a simple
explicit construction of such emergent stochastic
behaviour in the setting of skew-product flows
exhibiting time-scale separation. 
We prove a diffusion limit for the following
ordinary differential equations (ODEs): 
\begin{subequations}
\label{eq:ode}
\begin{align}
\dot{x}^{(\eps)}& = \epsilon^{-1} f_0(\y)+f(\x,\y), \quad \x(0)=\xi,
\label{eq:odex}\\
\dot{y}^{(\eps)} & = \epsilon^{-2} g(\y), \quad \y(0)=\eta. 
\label{eq:odey}
\end{align}
\end{subequations}
Here $\x \in \cX, \y \in \cY$.
Roughly speaking we assume that the equation for $\y$ 
has a compact attractor $\Lambda\subset\cY$ supporting an invariant
measure $\mu$ and satisfying certain ``mild chaoticity'' assumptions.
These conditions are stated
precisely in Assumptions \ref{ass:1} below.   In addition, we assume
that $f_0$ should average to zero with respect to $\mu$.

Consider the stochastic differential equation (SDE) 
\begin{equation}
X(t)=\xi+\int_0^t F(X(s))\,ds+\sqrt \Sigma W(t).
\label{eq:isde}
\end{equation}
where $W$ is unit $d$-dimensional Brownian motion, $\Sigma$ is a $d\times d$
covariance matrix (depending on $f_0$ and $g$) and
$F(x)$ is the average of $f(x,\cdot)$ with respect to
the aforementioned invariant measure $\mu$. 
The goal of the note is to prove the following
limit theorem relating the solution $\x$ of \eqref{eq:ode} 
and $X$ of \eqref{eq:isde}. 
Throughout we use $\to_w$ to
denote weak convergence in the sense of probability
measures \cite{Bil68,billingsley}.

\begin{thm}
\label{thm-sdelimit}
Let Assumptions \ref{ass:1} hold and let
$\eta$ be a random variable distributed
according to the measure $\mu$ on the attractor $\Lambda\subset\cY$ and
fix any $\xi \in \cX$.
Then, almost surely with respect to $\eta$ and $W$,
there is a unique solution $(\x,\y) 
\in C^1([0,\infty); \cX \times \cY)$ of \eqref{eq:ode}
for each $\epsilon>0$,
and a unique solution $X \in C([0,\infty);\cX)$ of
\eqref{eq:isde}.
Furthermore
$\x\to_w X$ in $C([0,\infty),\cX)$ as $\epsilon\to0$.
\end{thm}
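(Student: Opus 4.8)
The plan is to recast \eqref{eq:ode} so that the only genuinely stochastic input is a rescaled Birkhoff integral of $f_0$, to which a weak invariance principle (WIP) applies, and then to transport that convergence through a continuous solution map, absorbing the remaining averaging error into the forcing. First I would write \eqref{eq:ode} in integral form. Let $\phi_u$ denote the flow of $\dot y=g(y)$, so that $\y(s)=\phi_{s/\eps^2}(\eta)$ remains in the compact attractor $\Lambda$ for all $s$; global existence and uniqueness of $(\x,\y)$ then follows from the local Lipschitz regularity in Assumptions \ref{ass:1} together with an a priori bound from Gr\"onwall's inequality, while well-posedness of \eqref{eq:isde} is the standard theory for additive-noise SDEs with Lipschitz drift $F$. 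Setting
$$W^{(\eps)}(t):=\eps^{-1}\int_0^t f_0(\y(s))\,ds=\eps\int_0^{t/\eps^2} f_0(\phi_u\eta)\,du,$$
the substitution $u=s/\eps^2$ exhibits exactly the Donsker scaling $n^{-1/2}\int_0^{nt}$ with $n=\eps^{-2}$. Since $f_0$ has zero $\mu$-average, the mild chaoticity in Assumptions \ref{ass:1} is precisely what guarantees a WIP, namely $W^{(\eps)}\to_w\sqrt\Sigma\,W$ in $C([0,\infty),\cX)$ with $\Sigma$ the associated covariance. The integral form of \eqref{eq:odex} then reads
$$\x(t)=\xi+\int_0^t F(\x(s))\,ds+W^{(\eps)}(t)+R^{(\eps)}(t),\qquad R^{(\eps)}(t):=\int_0^t\big(f(\x(s),\y(s))-F(\x(s))\big)\,ds.$$

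Next I would introduce the solution map $\Psi:C([0,\infty),\cX)\to C([0,\infty),\cX)$ sending a forcing path $w$ to the unique solution of $\Xi(t)=\xi+\int_0^t F(\Xi(s))\,ds+w(t)$. Gr\"onwall's inequality and the Lipschitz property of $F$ make $\Psi$ continuous, indeed locally Lipschitz in the uniform norm on compact time intervals. The decisive simplification is that the noise enters \eqref{eq:odex} additively, through $f_0(\y)$ alone: consequently $\Psi(\sqrt\Sigma\,W)$ is exactly the solution $X$ of \eqref{eq:isde}, with no It\^o--Stratonovich correction, while by construction $\x=\Psi(W^{(\eps)}+R^{(\eps)})$. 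If I can show $R^{(\eps)}\to_w 0$ (equivalently, in probability) in $C([0,T],\cX)$ for every $T$, then Slutsky's theorem gives $W^{(\eps)}+R^{(\eps)}\to_w\sqrt\Sigma\,W$, and the continuous mapping theorem applied to $\Psi$ yields $\x=\Psi(W^{(\eps)}+R^{(\eps)})\to_w\Psi(\sqrt\Sigma\,W)=X$, which is the claim.

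The main obstacle is therefore the averaging estimate $R^{(\eps)}\to_w 0$, and this is where the bulk of the work lies. Writing $\tilde f:=f-F$, which has zero $\mu$-average in its second argument for each fixed $x$, I would first note that $\x$ is tight in $C([0,T],\cX)$: it is the sum of the equicontinuous drift integral (bounded because $f$ is continuous and $\x,\y$ remain in compact sets on $[0,T]$ by the a priori bound) and the weakly convergent, hence tight, path $W^{(\eps)}$. Tightness supplies a uniform modulus of continuity, so on a mesh $0=t_0<\dots<t_N=T$ of width $h$ I may freeze the slow variable, replacing $\tilde f(\x(s),\y(s))$ by $\tilde f(\x(t_k),\y(s))$ on each subinterval at a cost controlled by $\Lip(\tilde f)$ times that modulus, which tends to $0$ as $h\to0$ uniformly in $\eps$. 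On each frozen subinterval, invariance of the flow turns $\int_{t_k}^{t_{k+1}}\tilde f(\x(t_k),\y(s))\,ds$ into $\eps^2\int_0^{h/\eps^2}\tilde f\big(\x(t_k),\phi_v(\phi_{t_k/\eps^2}\eta)\big)\,dv$, a Birkhoff time average over the long horizon $h/\eps^2$; choosing $h=h(\eps)\to0$ with $h/\eps^2\to\infty$ and invoking a \emph{uniform} ergodic theorem — uniform over the compact set in which $\x$ lives, which follows from the equicontinuity of $x\mapsto\tilde f(x,\cdot)$ and a finite-net argument — drives these contributions to $0$. The care needed to make the two scales $h$ and $\eps$ cooperate, and to upgrade Birkhoff convergence to hold uniformly in the frozen slow variable, is the crux; everything else reduces to Gr\"onwall estimates and the continuous mapping theorem.
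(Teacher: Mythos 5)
Your overall architecture is exactly the paper's: the decomposition $\x=\xi+W^{(\eps)}+R^{(\eps)}+\int_0^t F(\x)$, the rescaling $W^{(\eps)}(t)=\eps\int_0^{t\eps^{-2}}f_0(y^{(1)}(\tau))\,d\tau$ identifying $W^{(\eps)}$ with $W_n$, $n=\eps^{-2}$, from the WIP, the continuous solution map plus the continuous mapping theorem, and a two-scale block decomposition with frozen slow variable to kill the averaging error. (For the freezing cost the paper uses the cruder deterministic bound $|\x(s)-\x(n\delta)|\le(|f_0|_\infty+|f|_\infty)\delta\eps^{-1}$ with the explicit choice $\delta=\eps^{3/2}$, avoiding any appeal to tightness of $\x$; your modulus-of-continuity route also works but is heavier than necessary.)

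The genuine gap is in how you dispose of the frozen block averages. You invoke ``a uniform ergodic theorem'' applied to $\eps^2\int_0^{h/\eps^2}\tilde f\bigl(x,\phi_v(\phi_{t_k/\eps^2}\eta)\bigr)\,dv$, with uniformity (via a finite net and Lipschitz continuity in $x$) only over the frozen variable $x$. But the starting points $\phi_{t_k/\eps^2}\eta$ vary with $k$ and $\eps$, and no ergodic theorem can be uniform over phase points in $\Lambda$: the attractor may contain equilibria or periodic orbits (the classical Lorenz attractor contains the origin), from which time averages converge to the wrong limit. So the uniformity you actually have does not cover the quantity you need, and a pathwise Birkhoff appeal fails too: as $\eps\to0$ the number of windows $N=T/h\to\infty$ while their starting times $t_k\eps^{-2}$ run off to infinity, and almost-everywhere convergence of Birkhoff averages carries no rate, hence no control simultaneously over all these late, relatively short windows. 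Note also that the paper never assumes ergodicity or any pathwise ergodic theorem --- Assumption \ref{ass:1}(5) is a quantitative convergence-in-measure statement, with rate $b(a,T)$ uniform in $x$, and that rate is precisely what replaces your missing uniformity. The correct move (the paper's Proposition \ref{prop-LD}) is to work in $L^1$ rather than pathwise: by invariance of $\mu$ the window starting at time $n\delta\eps^{-2}$ has the same law as the window starting at $0$, so $\bbE|J_n|\le a+2|f|_\infty\,b(a,\delta\eps^{-2})$ uniformly in $n$, and summing expectations gives $\bbE\sup_{t\in[0,T]}|I_2(t)|\le T\bigl(a+2|f|_\infty\,b(a,\delta\eps^{-2})\bigr)\to Ta$, with $a>0$ arbitrary. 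Your finite-net idea for the random frozen value $\x(t_k)$ is a sound ingredient (indeed it patches a point the paper itself glosses), but it must be combined with stationarity and the assumed rate $b(a,T)$, not with a uniform ergodic theorem that is unavailable under the paper's hypotheses and false in its intended generality.
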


Throughout the note we make the following standing assumptions. 

\begin{assumptions}
\label{ass:1}
The differential equations \eqref{eq:ode} satisfy the
following:

\begin{enumerate}

\item Equation \eqref{eq:odey} with $\epsilon=1$ has a compact
invariant set $\Lambda$, $\eta\in\Lambda$, and there
is an invariant probability measure $\mu$ 
supported on $\Lambda$;
expectation with respect to this measure is denoted by $\E$.

\item   The vector fields $g:\Lambda\to\R^\ell$ and $f_0:\Lambda\to\R^d$
are locally Lipschitz, and 
the vector field $f: \cX \times \Lambda \to \R^d$ is bounded
and Lipschitz with uniform Lipschitz constant $L$.

\item The vector field $f_0: \Lambda \to \R^d$ averages to $0$ under $\mu: \E f_0=0.$

\item Define $W_n(t)=n^{-\frac12}\int_0^{nt}f_0(y^{(1)}(\tau))\,d\tau$, 
for $t\ge0$.  Fix any $T>0$.
We assume the weak invariance principle (WIP), namely that $W_n\to_w \sqrt \Sigma W$ in $C([0,T],\R^d)$ as $n\to\infty$
for unit $d$-dimensional Brownian motion $W$ and some covariance matrix 
$\Sigma,$ independent of $T$.

\end{enumerate}
\end{assumptions}

\begin{remark}
(a) The regularity conditions on $f,f_0,g$ in assumption 2 guarantee global existence and uniqueness of solutions to 
the ODEs \eqref{eq:ode} and the SDE \eqref{eq:isde} 
for all positive time and all initial conditions $\xi\in\cX$, $\eta\in\Lambda$.    We note that
uniformity of the Lipschitz constant for $f$ is 
automatic in $y$ since $\Lambda$ is compact.

\noindent(b)     
Assumption 4 holds for a large class of flows.
In particular, the WIP
is proved in \cite{MelNic} for flows that have a Poincar\'e map modelled by a 
Young tower~\cite{Young98,Young99} with summable tails.   This includes Anosov and  Axiom~A flows,
nonuniformly hyperbolic flows such as H\'enon-like flows (where the Poincar\'e map has a H\'enon-like attractor),
and Lorenz attractors~\cite{HM07} (including the case of the classical parameter values in~\cite{Lorenz63}).
In this  class of examples, the Poincar\'e map has good statistical properties 
and limit laws such as the WIP transfer to the flow~\cite{MT04}.
\end{remark}

There are two main routes leading to emergent
stochastic behaviour in deterministic systems.
The first is through the elimination of a large
number of degrees of freedom, and the reliance
on the central limit theorem to provide fluctuations and 
the second is through time-scale separation; see
\cite{GKS04} for an overview.
The first mechanism does not require any assumption
of chaotic behaviour and may even be observed
in large systems of linear oscillators; work
in this area was initiated in \cite{FKM65} and
more recent work includes \cite{KSTT02,ave09}.
The second mechanism relies on the presence of
some fast chaotic dynamics to induce white noise 
and has a long history in the applied literature; we
mention, in particular, the work in
\cite{Bec90,JKRH01,MT00,MTV06}. Our work
provides a very simple scenario in which the
second mechanism may be used, provably, to establish
emergent stochastic dynamics. We anticipate that
the basic ideas would apply to a far larger class
of problems as indicated, for example, by the
program outlined in
\cite{mac10}. Moreover the basic mechanism that
underlies the work in this note was identified
and studied in the seminal paper \cite{PK74}. However
the conditions in that paper can be hard to
verify for specific ordinary differential
equations. In contrast our construction holds for explicit systems on $\cY$ 
such as the classical Lorenz equations.

An important aspect of our theory is that we require no knowledge of mixing properties of the flow,
In contrast, previous rigorous results in the literature required strong assumptions on the mixing properties
of the flow.   See~\cite{Dolgopyat05} for the most powerful results in this direction where it is required
that the flow has stretched exponential decay of correlations.    Even for Anosov flows this has been proved
only in very special cases~\cite{Chernov98, Dolgopyat98a, Liverani04}.   Superpolynomial decay has been proved for 
typical Anosov and Axiom A flows~\cite{Dolgopyat98b, FMT07} and typical nonuniformly hyperbolic flows governed by Young 
towers~\cite{M07,M09} but only for very smooth observables; this smoothness would
have to be imposed on $f_0$.    For the Lorenz equations there are currently no results at all on rates of mixing
(though superpolynomial decay holds for typical nearby flows by~\cite{M09}).

\section{Diffusion Limit}

We now prove the diffusion limit
contained in Theorem \ref{thm-sdelimit}.
The method of proof generalizes that
described in Chapter 18 of \cite{PavlSt06b}
for homogenization in SDEs with additive noise and
a skew-product form.

\begin{prop}
\label{prop-WIP}
Let $(\x(t),\y(t))$ denote the solution to
\eqref{eq:ode} with $f \equiv 0$, $\xi=0$ and 
with $\eta$ a random variable distributed according to the measure 
$\mu$ on $\Lambda$.
Let $T>0$.  Then $\x\to_w \sqrt \Sigma W$ in $C([0,T],
\cX)$ as $\epsilon\to0$.
Here, $W$ is unit $d$-dimensional Brownian motion  and the covariance
matrix $\Sigma$ is independent of $T$.
\end{prop}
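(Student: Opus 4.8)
The plan is to reduce Proposition~\ref{prop-WIP} to the weak invariance principle in Assumption~\ref{ass:1}(4) by an explicit change of time variable. Since $f \equiv 0$ and $\xi = 0$, equation \eqref{eq:odex} becomes $\dot{\x} = \epsilon^{-1} f_0(\y)$, so integrating directly gives
\begin{equation*}
\x(t) = \epsilon^{-1}\int_0^t f_0(\y(s))\,ds.
\end{equation*}
The key observation is that the fast equation \eqref{eq:odey}, $\dot{\y} = \epsilon^{-2} g(\y)$, is just a time-rescaling of the $\epsilon = 1$ flow: if $y^{(1)}$ solves $\dot{y}^{(1)} = g(y^{(1)})$ with $y^{(1)}(0) = \eta$, then $\y(s) = y^{(1)}(\epsilon^{-2} s)$ solves \eqref{eq:odey} with the same initial condition. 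Substituting and changing variables $\tau = \epsilon^{-2} s$ (so $ds = \epsilon^2\,d\tau$) yields
\begin{equation*}
\x(t) = \epsilon^{-1}\int_0^t f_0\bigl(y^{(1)}(\epsilon^{-2}s)\bigr)\,ds
      = \epsilon\int_0^{\epsilon^{-2}t} f_0\bigl(y^{(1)}(\tau)\bigr)\,d\tau.
\end{equation*}

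**Matching to the WIP.** Now I would compare this against the process $W_n$ defined in Assumption~\ref{ass:1}(4), namely $W_n(t) = n^{-\frac12}\int_0^{nt} f_0(y^{(1)}(\tau))\,d\tau$. Setting $n = \epsilon^{-2}$, we have $n^{-\frac12} = \epsilon$ and $nt = \epsilon^{-2}t$, so the displayed expression for $\x(t)$ is \emph{exactly} $W_n(t)$ with $n = \epsilon^{-2}$. Thus $\x = W_{\epsilon^{-2}}$ as processes in $C([0,T],\cX)$. As $\epsilon \to 0$ we have $n = \epsilon^{-2} \to \infty$, and the WIP in Assumption~\ref{ass:1}(4) asserts precisely that $W_n \to_w \sqrt{\Sigma}\,W$ in $C([0,T],\R^d)$ as $n \to \infty$, with $\Sigma$ independent of $T$. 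Since $\eta$ is distributed according to the invariant measure $\mu$ on $\Lambda$ — which is the distribution under which the WIP is stated — the convergence $\x \to_w \sqrt{\Sigma}\,W$ follows immediately.

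**The main obstacle.** The substance of the argument is entirely contained in the time-rescaling identity; once that is in place the result is a direct restatement of the hypothesis, so there is no real analytic difficulty to overcome here. The one point that deserves care is the mode of convergence along a continuum of parameters versus a discrete sequence: Assumption~\ref{ass:1}(4) gives convergence as the integer $n \to \infty$, whereas here $\epsilon \to 0$ runs continuously and $\epsilon^{-2}$ need not be an integer. I would address this by noting that the WIP as formulated is really a statement about the family of processes indexed by the scaling parameter, and that the standard formulation (via the functional central limit theorem for the flow) delivers convergence along the continuous parameter as well; alternatively one checks directly that the rescaled integral $W_\lambda(t) = \lambda^{-1/2}\int_0^{\lambda t} f_0(y^{(1)})\,d\tau$ converges for real $\lambda \to \infty$, which is how such invariance principles are established in \cite{MelNic,MelNic2}. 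No other step presents a difficulty, and the identification of $\Sigma$ as the same covariance matrix in both statements is automatic from the construction.
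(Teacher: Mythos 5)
Your proposal is correct and follows essentially the same argument as the paper: integrate the $\x$ equation, use the time-rescaling $\y(t)=y^{(1)}(t\epsilon^{-2})$, and identify $\x(t)=W_n(t)$ with $n=\epsilon^{-2}$ so that the WIP of Assumption~\ref{ass:1}(4) applies directly. Your remark about convergence along the continuous parameter $\epsilon^{-2}$ rather than integer $n$ is a point the paper silently elides, and your resolution of it is appropriate.
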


\begin{proof}
Note that $y^{(1)}(t)$ is the solution to the IVP $\dot y=g(y)$, $y(0)=\eta$.
Define $W_n(t)=n^{-\frac12}\int_0^{nt}f_0(y^{(1)}(\tau))\,d\tau$, for $t\in[0,T]$.
By the WIP, $W_n\to_w \sqrt \Sigma W$ in $C([0,T],\cX)$ as $n\to\infty$.

Now $\y(t)=y^{(1)}(t\epsilon^{-2}).$ Hence 
\[
\x(t) =\epsilon^{-1}\int_0^t f_0(\y(s))\,ds = 
\epsilon\int_0^{t\epsilon^{-2}} f_0(y^{(1)}(\tau))\,d\tau. 
\]
Writing $n=\epsilon^{-2}$, we obtain
$\x(t) =W_n(t)$ and the result follows.
\end{proof}
\vspace{0.1in}

\noindent {\em Proof of Theorem \ref{eq:ode}}
To prove weak convergence on $[0,\infty)$, it suffices to establish weak convergence on $[0,T]$
for each fixed  $T>0$.   

Write $W^{(\eps)}(t)=\int_0^t \frac{1}{\eps} f_0(\y(s))\,ds$.
By integrating the $\x$ equation we have
\begin{align*}
\x(t)&=\xi+\int_0^t \frac{1}{\eps} f_0(\y(s))\,ds+\int_0^t f(\x(s),\y(s))\,ds\\
&=\xi+W^{(\eps)}(t) +\int_0^t F(\x(s))\,ds +Z^{(\eps)}(t)
\end{align*}
where
$$Z^{(\eps)}(t)=\int_0^t \bigl( f(\x(s),\y(s))-F(\x(s)) \bigr) ds.$$
We show below that $Z^{(\eps)}\to 0$ 
in probability in $C([0,T],\cX)$.
(That is, for any $c>0$ there exists
$\eps_0>0$ such that $\mu(\max_{[0,T]}|Z^{(\eps)}|)>c)<c$
for all $\eps\in(0,\eps_0)$.
By Proposition \ref{prop-WIP}, 
$W^{(\eps)} \to_w \sqrt\Sigma W$ in $C([0,T],\cX)$. 
It follows that $W^{(\eps)}+Z^{(\eps)} \to_w \sqrt\Sigma W$ 
in $C([0,T],\cX).$
Now consider the continuous map
$\mathcal{G}:C([0,T],\cX)\to C([0,T],\cX)$ given by $\mathcal{G}(u)=v$
where $v$ is the unique solution to the integral equation
$$v(t)=\xi+u(t)+\int_0^t F(v(s))\,ds.$$  
Define $v^{(\eps)}= 
\mathcal{G}(W^{(\eps)}+Z^{(\eps)})$.
Since continuous maps preserve weak convergence, it follows that 
$v^{(\eps)}\to_w \mathcal{G}(\sqrt\Sigma W)=X$.   But $v^{(\eps)}=x^{(\eps)}$ by uniqueness of solutions,
so $x^{(\eps)}\to_w X$ as required.

It remains to show the convergence in probability of
$Z^{(\eps)}$ to $0$ in $C([0,T],\cX)$.
Define
$g(x,y)=f(x,y)-F(x)$ and note that $|g|_\infty\le 2|f|_\infty$
and $\Lip(g)\le 2L$.   Then
$Z^{(\eps)}(t) =\int_0^t g(\x(s),\y(s))\,ds$.
Let $N=[t/\eps^{3/2}]$ and write $Z^{(\eps)}(t)=Z^{(\eps)}(N\eps^{3/2})+I_0$
where $I_0=\int_{N\eps^{3/2}}^t g(\x(s),\y(s))\,ds$.  
We have 
\begin{align} \label{eq-I0}
|I_0|\le (t-N\eps^{3/2})|g|_\infty \le 2|f|_\infty\eps^{3/2}.
\end{align}

We now estimate $Z^{(\eps)}(N\eps^{3/2})$ as follows:
\begin{align*}
Z^{(\eps)}(N\eps^{3/2})&= \sum_{n=0}^{N-1} \int_{n\eps^{3/2}}^{(n+1)\eps^{3/2}} 
g(\x(s),\y(s))\,ds\\
& =\sum_{n=0}^{N-1}
\int_{n\eps^{3/2}}^{(n+1)\eps^{3/2}}\bigl(g(\x(s),\y(s))-g(\x(n\eps^{3/2}),\y(s))\bigr)\,ds \\ &\qquad\qquad +\sum_{n=0}^{N-1}
\int_{n\eps^{3/2}}^{(n+1)\eps^{3/2}} g(\x(n\eps^{3/2}),\y(s))\,ds \\
& = I_1+I_2.
\end{align*}

For $s \in [n\eps^{3/2},(n+1)\eps^{3/2}]$, we have
$|\x(s)-\x(n\eps^{3/2})| \le (|f_0|_{\infty}+|f|_{\infty})\eps^{1/2}$.
Hence 
\begin{align} \label{eq-I1}
|I_1|\le N\eps^{3/2} \Lip(g)(|f_0|_{\infty}+|f|_{\infty})\eps^{1/2}\le 
2L(|f_0|_{\infty}+|f|_{\infty})T\eps^{1/2}.
\end{align}

Next,
\begin{align*}
I_2 & =\sum_{n=0}^{N-1} \int_{n\eps^{3/2}}^{(n+1)\eps^{3/2}}g(\x(n\eps^{3/2}),\y(s))\,ds
=\eps^{3/2}\sum_{n=0}^{N-1}J_n,
\end{align*}
where
\begin{align*}
J_n  &= \eps^{-3/2}\int_{n\eps^{3/2}}^{(n+1)\eps^{3/2}} g(\x(n\eps^{3/2}),\y(s))\,ds
 \\ & = \eps^{1/2}\int_{n\eps^{-1/2}}^{(n+1)\eps^{-1/2}}
g(\x(n\eps^{3/2}),y^{(1)}(s))\,ds.
\end{align*}
Hence
\begin{align} \label{eq-J}
|I_2|\le \eps^{3/2}\sum_{n=0}^{[T\eps^{-3/2}]-1}|J_n|.
\end{align}

For $u\in\R^d$ fixed, we define
\[
\tildeJ_n(u)=\eps^{1/2} 
\int_{n\eps^{-1/2}}^{(n+1)\eps^{-1/2}}  g(u,y^{(1)}(s))\, ds
=\eps^{1/2}\int_{n\eps^{-1/2}}^{(n+1)\eps^{-1/2}}  
A_u\circ \phi_s\,ds,
\]
where $A_u(y)=g(u,y)$.
Note that $\tildeJ_n(u)=\tildeJ_0(u)\circ \phi_{n\eps^{-1/2}}$,
and so
$\E|\tildeJ_n(u)| =\E|\tildeJ_0(u)|$.
By the ergodic theorem,
$\E|\tildeJ_0(u)|\to0$ as $\eps\to0$ for each $u$.

Let $Q>0$ and write $I_2=K_{Q,1}+K_{Q,2}$ where
\begin{align*}
& K_{Q,1}=I_21_{B_\eps(Q)}, \quad
K_{Q,2}=I_21_{B_\eps(Q)^c}, \quad
 B_\eps(Q)=\bigl\{\max_{[0,T]}|\x|\le Q\bigr\}.
\end{align*}

For any $a>0$, there exists a finite subset $S\subset\R^d$ such that
$\dist(x,S)\le a/(2L)$ for any $x$ with $|x|\le Q$.
Then for all $n\ge0$, $\eps>0$,
\[
1_{B_\eps(Q)}|J_n|\le \sum_{u\in S}|\tildeJ_n(u)|+a.
\]
Hence by~\eqref{eq-J},
\begin{align*}
 \E\max_{[0,T]}|K_{Q,1}|
 & \le \eps^{3/2}\sum_{n=0}^{[T\eps^{-3/2}]-1}
\sum_{u\in S}\E|\tildeJ_n(u)|+Ta
\\ & = \eps^{3/2}\sum_{n=0}^{[T\eps^{-3/2}]-1}
\sum_{u\in S}\E|\tildeJ_0(u)|+Ta
  \\ & \le T \sum_{u\in S}\E|\tildeJ_0(u)|+Ta.
\end{align*}
Since $a>0$ is arbitrary, we obtain for each fixed $Q$ that
$\max_{[0,T]}|K_{Q,1}|\to0$ in $L^1$, and hence in probability, as $\eps\to0$.

Next, since $\x-W^{(\eps)}$ is bounded on $[0,T]$, for $Q$ sufficiently large
\[
\mu\bigl\{\max_{[0,T]}|K_{Q,2}|>0\bigr\}\le 
\mu\bigl\{\max_{[0,T]}|\x|\ge Q\bigr\}\le 
\mu\bigl\{\max_{[0,T]}|W^{(\eps)}|\ge Q/2\bigr\}.
\]
Fix $c>0$.  Increasing $Q$ if necessary, we can arrange that
$\mu\{\max_{[0,T]}|\sqrt\Sigma W|\ge Q/2\}<c/4$.
By the continuous mapping theorem,
$\max_{[0,T]}|W^{(\eps)}|\to_d \max_{[0,T]}|\sqrt\Sigma W|$.
Hence there exists $\eps_0>0$ such that
$\mu\{\max_{[0,T]}|W^{(\eps)}|\ge Q/2\}<c/2$ for all $\eps\in(0,\eps_0)$.
For such $\eps$,
\[
\mu\bigl\{\max_{[0,T]}|K_{Q,2}|>0\bigr\}<c/2.
\]
Shrinking $\eps_0$ if necessary, we also have that
$\mu\{\max_{[0,T]}|K_{Q,1}|>c/2\}<c/2$.
Hence $\mu\{\max_{[0,T]}|I_2|>c\}<c$, and so 
$\max_{[0,T]}|I_2|\to0$ in probability.  
Combining this with estimates~\eqref{eq-I0} and~\eqref{eq-I1}, we obtain that 
$\max_{[0,T]}|Z^{(\eps)}|\to0$ in probability as required.

\section{Conclusions}

The construction in this paper shows how some new
ideas in the theory of dynamical systems can be
used to prove a homogenization principle in ODEs, 
leading to emergent stochastic
behaviour. The arguments are very straightforward, and
are given only in the case of additive noise. However
in the situation where the limiting SDE is one dimensional 
the ideas of Sussmann \cite{suss78} 
can be used to derive a limiting SDE in which noise
appears multiplicatively. Generalizing these ideas to
skew product flows where the SDE is of higher
dimension will require the theory of rough paths \cite{Lyons98}
and is the subject of ongoing work.

Finally a comment on the differences
between \cite{PavlSt06b}
{\em homogenization} and {\em averaging}
in ODE systems like \eqref{eq:ode}.
There is current interest \cite{aet10}
in the derivation of averaging principles for systems
of ODEs exhibiting three time scales of order
${\cal O}(\eps^{-2}), {\cal O}(\eps^{-1})$ and
${\cal O}(1)$. The motivation is the construction
of efficient numerical schemes for computation
of the averaged solution, which is deterministic.
Theorem \ref{thm-sdelimit}, which also concerns
the limiting behaviour of a system containing
three time-scales, corresponds to a
homogenization principle with
a stochastic limit, 
rather than an averaging principle 
with deterministic limit. Thus our work
provides an example of a three scale
system  
for which an effective deterministic
averaged equation cannot exist.

\vspace{0.2in}

\noindent{\bf Acknowledgements.} The authors are grateful
to Niklas Br\"annstr\"om and Matthew Nicol for helpful discussions. 
AMS is grateful to EPSRC and ERC for financial support.
The research of IM was supported in part by  EPSRC Grant EP/F031807/01

\def\cprime{$'$} \def\cprime{$'$} \def\cprime{$'$} \def\cprime{$'$}
  \def\cprime{$'$} \def\cprime{$'$} \def\cprime{$'$}
  \def\Rom#1{\uppercase\expandafter{\romannumeral #1}}\def\u#1{{\accent"15
  #1}}\def\Rom#1{\uppercase\expandafter{\romannumeral #1}}\def\u#1{{\accent"15
  #1}}\def\cprime{$'$} \def\cprime{$'$} \def\cprime{$'$} \def\cprime{$'$}
  \def\cprime{$'$} \def\cprime{$'$}

\end{document}